\newtheorem{theorem}{Theorem}[section]
\newtheorem{lemma}[theorem]{Lemma}
\theoremstyle{remark}
\newtheorem{property}[theorem]{Property}
\newcommand{\cgB}{\mathcal{B}}
\newcommand{\cgF}{\mathcal{F}}
\newcommand{\cgL}{\mathcal{L}}
\newcommand{\cgR}{\mathcal{R}}
\newcommand{\Inc}{\operatorname{Inc}}
\newcommand{\bdim}{\operatorname{bdim}}
\newcommand{\ldim}{\operatorname{ldim}}
\newcommand{\ple}{\operatorname{ple}}
\newcommand{\cut}{\operatorname{Cut}}
\begin{document}
\title[COMPONENTS AND BLOCKS]{Boolean Dimension, Components and Blocks}

\author[M\'{E}SZ\'{A}ROS]{Tam\'{a}s M\'{e}sz\'{a}ros}
\address{Tam\'{a}s M\'{e}sz\'{a}ros \\
Fachbereich Mathematik und Informatik\\
  Kombinatorik und Graphentheorie\\
  Freie Universit\"at Berlin\\
  Germany}
\email{meszaros.tamas@fu-berlin.de}

\author[MICEK]{Piotr Micek}
\address{Piotr Micek\\
Theoretical Computer Science Department\\
  Faculty of Mathematics and Computer Science\\
  Jagiellonian University\\
  Krak\'ow, Poland}
\email{piotr.micek@tcs.uj.edu.pl}

\author[TROTTER]{William T. Trotter}
\address{William T. Trotter\\
School of Mathematics\\
  Georgia Institute of Technology\\
  Atlanta, Georgia}

\email{trotter@math.gatech.edu}

\subjclass[2010]{06A07, 05C35}

\keywords{Dimension, Boolean dimension, local dimension, component, block}

\thanks{Tam\'as M\'esz\'aros is supported by the Dahlem Research School of Freie Universit\"at Berlin.}
\thanks{Piotr Micek is partially supported by a Polish National Science Center grant (SONATA BIS 5; UMO-2015/18/E/ST6/00299).}
\thanks{William~T. Trotter is supported by a Simons Foundation Collaboration Grant.}

\begin{abstract}
We investigate the behavior of Boolean dimension with respect to
components and blocks.  To put our results in context, we note that
for Dushnik-Miller dimension, we have that if $\dim(C)\le d$ for
every component $C$ of a poset $P$, then $\dim(P)\le \max\{2,d\}$; also if $\dim(B)\le d$ for every block $B$ of a poset
$P$, then $\dim(P)\le d+2$. By way of constrast, 
local dimension is well behaved with respect to components, but 
not for blocks: if $\ldim(C)\le d$ for every component $C$ of a 
poset $P$, then $\ldim(P)\le d+2$; however, for every $d\ge4$, there
exists a poset $P$ with $\ldim(P)=d$ and  $\dim(B)\le 3$ for every block 
$B$ of $P$. In this paper we show that Boolean dimension behaves 
like Dushnik-Miller dimension with respect to both 
components and blocks: if $\bdim(C)\le d$ for every component $C$ of
$P$, then $\bdim(P)\le 2+d+4\cdot2^d$; also if $\bdim(B)\le d$ for every block of $P$, then $\bdim(P)\le 
19+d+18\cdot 2^d$.
\keywords{Posets \and Boolean dimension}
\end{abstract}

\maketitle

\section{Notation and Terminology}

We consider combinatorial problems for
finite posets. As has become standard in the literature, we
use the terms \textit{elements} and \textit{points} interchangeably
in referring to the members of the ground set of some poset $P=(X,<)$.
We will write $x\parallel y$ when $x$ and $y$ are \emph{incomparable}
in $P$, and we let $\Inc(P)$ denote the set of all ordered pairs 
$(x,y)$ with $x\parallel y$ in $P$.  As a binary 
relation, $\Inc(P)$ is symmetric. The
\textit{dual} of a poset $P$ will be denoted by $P^*$, while the dual of a linear order $L$ on $X$ by $L^*$. If $L$ is a linear order on $X$ and $Y\subseteq X$, then we will write $L(Y)$ for the restriction of $L$ to $Y$. 
We will also use the notation $L=[A<B]$ when the elements of $X$ can
be labeled so that $L=[u_1<u_2<\cdots<u_m]$ and $A=[u_1<u_2<\cdots<u_{k}]$, $B=[u_{k+1}<u_{k+2}<\cdots<u_m]$ for some index $k$.
This notation then generalizes naturally to an expression such as
$L=[A_1<A_2<\cdots<A_s]$. For two elements $x,y\in P$ we say that $x$ \emph{covers} $y$ if $y<x$ and there is no element $z\in P$ with $y<z<x$. The \emph{cover} graph of $P$ has vertex set the elements of $P$, and two vertices $x$ and $y$ are joined by an edge if one of them covers the other in $P$.  Finally, we will also use the now standard notation $[n]=\{1,2,\dots,n\}$.

\medskip

A nonempty family $\cgR=\{L_1,L_2,\dots,L_d\}$ of linear extensions
of $P$ is called a \textit{realizer} of $P$ when 
$x\leq y$ in $P$ if and only if $x\leq y$ in $L_i$ for each $i=1,2,\dots,d$.
Clearly, $\cgR$ is a realizer if and only if for each $(x,y)\in
\Inc(P)$, there is some $i$ for which $x>y$ in $L_i$.
The \textit{dimension} of a poset $P$, as defined by Dushnik
and Miller in their seminal paper~\cite{bib:DusMil}, is the least
positive integer $d$ for which $P$ has a realizer of size~$d$. A subset $S$ of $\Inc(P)$ is called \emph{reversible} if there is a linear extension $L$ of $P$ with $x>y$ in $L$ for every $(x,y)$. When $P$ is not a chain then the dimension of $P$ is the least positive integer $d$ for which there is a partition $\Inc(P)=S_1\cup S_2 \cup \dots \cup S_d$ with each $S_i$ reversible. A subset $\{(x_\alpha,y_\alpha)\ :\ \alpha\in [k]\}$ of $\Inc(P)$ is called an \emph{alternating cycle} if $x_\alpha\leq y_{\alpha+1}$ for all $\alpha\in [k]$, with addition on the indicies understood cyclically. It is easy to see that alternating cycles are not reversible. On the other hand, Trotter and Moore~\cite{bib:TroMoo} proved that they are the only obstructions for a set of incomparable pairs to be reversible, i.e., $S\subseteq \Inc(P)$ is reversible if and only if it does not contain an alternating cycle. For more details about now standard concepts and techniques for working with Dushnik-Miller
dimension, the reader may consult any of several recent research
papers, e.g.,~\cite{bib:JMMTWW}, \cite{bib:StrTro} and \cite{bib:TrWaWa}
or the
research monograph~\cite{bib:Trot-Book}.

\medskip

In recent years, researchers have been investigating combinatorial
problems for two variations of Dushnik-Miller dimension,
known as \textit{Boolean dimension} and \textit{local dimension},
respectively.

\medskip

Let $P$ be a poset with at least two elements and
let $\cgB=\{L_1,L_2,\dots,L_d\}$ be a nonempty family of linear orders (need not to be linear extensions of $P$) on the
ground set of $P$.
Also, let $\tau$ be a Boolean function which maps all $0$-$1$ strings of length~$d$
to $\{0,1\}$.  For each ordered pair $(x,y)$ of distinct elements of $P$,
we form the bit string $q(x,y,\cgB)$ of length~$d$
which has value $1$ in coordinate $i$
if and only if $x<y$ in $L_i$.  We call the pair $(\cgB,\tau)$ a
\textit{Boolean realizer} of $P$ if for every ordered pair $(x,y)$ of distinct
elements of $P$, we have $x<y$ in $P$ if and only if $\tau(q(x,y,\cgB))=1$.
Ne\v{s}et\v{r}il and Pudl\'{a}k~\cite{bib:NesPud} (by slightly modifying the definition of Gambosi, Ne\v{s}et\v{r}il and Talamo~\cite{bib:GNT})
defined the \textit{Boolean dimension of $P$},
denoted $\bdim(P)$, as the least positive integer $d$ for which
$P$ has a Boolean realizer $(\cgB,\tau)$ with
$|\cgB|=d$. By convention, the Boolean dimension of a one element poset is $1$.

Clearly, $\bdim(P)\le\dim(P)$, since if $\cgR=\{L_1,L_2,\dots,
L_d\}$ is a realizer of $P$, we can simply take $\tau$ to be the function which
maps the all ones bit string $(1,\dots,1)$ to itself while all other bit strings of length~$d$
are mapped to~$0$. Trivially, $\bdim(P)=1$ if and only if $P$ is either a chain or an
antichain; if $Q$ is a subposet of $P$, then
$\bdim(Q)\le\bdim(P)$; and $\bdim(P)=\bdim(P^*)$.
It is an easy exercise to show that if $\bdim(P)=2$, then
$\dim(P)=2$, while Trotter and Walczak \cite{bib:TroWal} proved the
modestly more challenging fact that if $\bdim(P)=3$, then
$\dim(P)=3$.  

\medskip

Again, let $P$ be a poset.  A \textit{partial linear extension},
abbreviated \textit{ple}, of $P$ is a linear extension of a subposet of $P$.
Whenever $\cgL$ is a family of $\ple$'s of $P$  and
$u\in P$, we set $\mu(u,\cgL)=|\{L\in\cgL:u\in L\}|$.  In turn, we
set $\mu(P,\cgL)=\max\{\mu(u,\cgL):u\in P\}$.  A family $\cgL$
of $\ple$'s of a poset $P$ is called a
\textit{local realizer} of $P$ if for every pair $(x,y)$ of distinct
elements of the ground set of $P$, we have $x<y$ in $P$ unless there
is some $L\in\cgL$ with $x>y$ in $L$.  The \textit{local dimension of 
$P$}, denoted $\ldim(P)$, is then defined\footnote{The concept of
local dimension is due to Torsten Ueckerdt \cite{bib:Ueck} and
was shared with participants of the workshop \textit{Order and Geometry}
held in Gu\l towy, Poland, September 14--17, 2016. Ueckerdt's 
new concept resonated with participants at the workshop and
served to rekindle interest in the notion of Boolean dimension as well.}
to be the least positive integer $d$ for which $P$ has
a local realizer $\cgL$ with $\mu(P,\cgL)=d$.

Clearly, $\ldim(P)\le\dim(P)$ since every realizer $P$ is also a local realizer. It is again easily seen that $\ldim(P)=1$ if and
only if $P$ is a chain; if $Q$ is a subposet
of $P$, then $\ldim(Q)\le\ldim(P)$; and $\ldim(P^*)=\ldim(P)$.
It is an easy exercise to show that if $\ldim(P)=2$, then $\dim(P)=2$.

\medskip

Recall that for $n\ge2$, the \textit{standard example} $S_n$ 
is a height~$2$ poset with minimal elements $A=\{a_1,a_2,\dots,a_n\}$, maximal elements $B=\{b_1,b_2,\dots,b_n\}$ and $a_i<b_j$ in $S_n$ if and
only if $i\neq j$. As is well known, $\dim(S_n)=n$.  On the other hand, it is
another easy exercise to show that $\bdim(S_n)=4$ for all $n\geq 4$, while $\ldim(S_n)=3$ for all $n\geq 3$.

\section{Statements of Results}

To state our main results, we will need some basic concepts of
graph theory, including connected and disconnected
graphs, components, cut vertices, and $k$-connected graphs for an
integer $k\ge2$. Recall that when $G$ is a graph, a connected
induced subgraph $H$ of $G$ is called a \emph{block} of $G$ when
$H$ is maximal connected subgraph without a cut vertex, or quivalently it is either a maximal $2$-connected subgraph, a bridge (with its ends) or an isolated vertex. Now a poset $P$ is said to
be \emph{connected} if its cover graph is connected.  A subposet $B$ of $P$ is
said to be \emph{convex} if $x,z\in B$ and $x<y<z$ in $P$ imply $y\in B$. When $B$ is a convex subposet of $P$, the
cover graph of $B$ is an induced subgraph of the cover graph of $P$.
A convex subposet $B$ of $P$ is called a \emph{component} of $P$ when the
cover graph of $B$ is a component of the cover graph of $P$.
A convex subposet $B$ of $P$ is called a \emph{block} of $P$ when
the cover graph of $B$ is a block of the cover graph of $P$.
A point $x$ in a poset $P$ is called a \emph{cut vertex} of $P$ when
$x$ is a cut vertex of the cover graph of $P$.

As is well known, when $P$ is a disconnected poset with components
$C_1$, $C_2$, $\dots$, $C_t$, then 
\begin{equation*}
\dim(P)=\max\{2,\max\{\dim(C_i):1\le i\le t\}\}.
\end{equation*}
For local dimension, it is an easy exercise to show that
\begin{equation*}
\ldim(P)\le 2+\max\{\ldim(C_i):1\le i\le t\},
\end{equation*}
but we do not know whether this inequality is best possible.  

We prove a corresponding, but somewhat more complicated, result for 
Boolean dimension. This is the first of our two main results.

\begin{theorem}\label{thm:bdim-components}
Let $P$ be a disconnected poset with components
$C_1,C_2,\dots,C_t$.  If $d=\max\{\bdim(C_i):1\le i\le t\}$,
then $\bdim(P)\le2 + d+ 4\cdot 2^d$.
\end{theorem}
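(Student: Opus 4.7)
The plan is to combine given Boolean realizers of the individual components into a single Boolean realizer of $P$, using a few extra linear orders to (a) detect whether a pair of distinct elements lies in the same component and (b) decode, for each possible bit string, the appropriate output of that component's Boolean function.

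First, for each component $C_i$, fix a Boolean realizer $(\cgB_i,\tau_i)$ of $C_i$ with $|\cgB_i|=d$, padding $\cgB_i$ with dummy linear orders if $\bdim(C_i)<d$ and making $\tau_i$ depend only on the original coordinates. Write $\cgB_i=\{L_1^i,\dots,L_d^i\}$. For each $j\in[d]$, define a linear order $L_j$ on $P$ by $L_j=[L_j^1<L_j^2<\dots<L_j^t]$; the restriction of $L_j$ to $C_i$ is $L_j^i$, so $q(x,y,(L_1,\dots,L_d))=q(x,y,\cgB_i)$ for every pair $(x,y)\in C_i^2$.

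Next, introduce two linear orders $M^+=[C_1<C_2<\cdots<C_t]$ and $M^-=[C_t<C_{t-1}<\cdots<C_1]$ using any fixed internal order on each component. For $x\in C_i$ and $y\in C_j$ the bits $q(x,y,M^+)$ and $q(x,y,M^-)$ agree when $i=j$ and disagree when $i\ne j$. Finally, for each $s\in\{0,1\}^d$ let $\mathcal{A}_s=\{C_i:\tau_i(s)=1\}$, $A_s=\bigcup\mathcal{A}_s$ and $B_s=P\setminus A_s$, and introduce two linear orders $N_s^+,N_s^-$ on $P$ in which all of $A_s$ precedes all of $B_s$, the internal order on $B_s$ is the same in both, and the internal orders on $A_s$ are reverses of each other. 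A short case check shows that the bits of $N_s^+$ and $N_s^-$ at $(x,y)$ differ precisely when both $x$ and $y$ lie in $A_s$.

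Now define the Boolean function $\tau$ on the full list of $d+2+2\cdot 2^d$ bits: if the two bits at $M^+$ and $M^-$ disagree, output $0$; otherwise read off $s=(q(x,y,L_1),\dots,q(x,y,L_d))$ and output $1$ if and only if the two bits at $N_s^+$ and $N_s^-$ disagree. If $x,y$ lie in different components then the first clause gives the required output $0$; if $x,y\in C_i$ then the bits at $M^+$ and $M^-$ agree, $s$ equals $q(x,y,\cgB_i)$, and the bits at $N_s^+$ and $N_s^-$ differ iff both $x,y\in A_s$ iff $C_i\in\mathcal{A}_s$ iff $\tau_i(s)=1$ iff $x<y$ in $P$. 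The main obstacle here is that the functions $\tau_i$ may differ wildly across components, so one cannot simply reuse any one $\tau_i$ globally; the key trick is that for a given pair we only need the single value $\tau_i(s)$ for the specific $s$ that this pair produces, and this value can be decoded from an indicator of whether both endpoints lie in $A_s$, which costs two linear orders per $s$. Since there are $2^d$ choices of $s$, this contributes $2\cdot 2^d$ orders, yielding a total of $d+2+2\cdot 2^d\le 2+d+4\cdot 2^d$ linear orders as required.
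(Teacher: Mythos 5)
Your proof is correct, and it actually takes a genuinely different — and more efficient — route than the paper's.

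The paper's proof uses two auxiliary lemmas: one to detect whether $x,y$ are in the same component (two linear orders), and one (Lemma~\ref{lem:color} in the paper) to recover the ordered pair of colors $(\phi_2(x),\phi_2(y))$ where $\phi_2$ assigns to each element the truth function $\tau_i$ of its component. Since there may be as many as $2^{2^d}$ distinct truth functions, this second step costs $4\cdot 2^d$ linear orders. Having recovered $\tau_i$, the paper then applies it to the bits from the merged orders $L_1,\dots,L_d$.

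You bypass the full recovery of $\tau_i$. The key observation is that for a given pair $(x,y)$ one never needs to know the entire function $\tau_i$; one only needs the single bit $\tau_i(s)$ for the specific string $s=q(x,y,\cgF_3)$. You encode, for each of the $2^d$ possible strings $s$, the set $A_s$ of components on which $\tau_i(s)=1$, and then use two linear orders $N_s^\pm$ (shuffle $A_s$ internally, keep the complement fixed) so that the bits disagree exactly when both endpoints lie in $A_s$. Since $A_s$ is a union of whole components and $x,y$ are already known to lie in the same component, this disagreement is equivalent to $\tau_i(s)=1$, i.e., to $x<y$ in $P$. This costs only $2\cdot 2^d$ linear orders rather than $4\cdot 2^d$, giving the improved bound $\bdim(P)\le 2+d+2\cdot 2^d$. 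Your argument is sound in every step (the detection via $M^\pm$, the parity trick on $N_s^\pm$, and the fact that the index $s$ used to select which pair $N_s^\pm$ to read is itself a Boolean function of the other bits), so this is a clean strengthening of the theorem as stated.
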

 
The situation with blocks is more
complex. For Dushnik-Miller dimension, Trotter, Walczak and Wang~\cite{bib:TrWaWa} proved that
when $P$ is a connected poset with blocks
$B_1$, $B_2$, $\dots$, $B_t$, then
\begin{equation*}
\dim(P)\leq 2+\max\{\dim(B_i):1\le i\le t\}.
\end{equation*}
Furthermore, this inequality is best possible. Neither the proof of the inequality,
nor the proof that the inequality is best possible is elementary.
Surprisingly, however, there is no parallel result for local dimension,
as Bosek, Grytczuk and Trotter~\cite{bib:BoGrTr} proved that for
every $d\ge4$, there is a poset $P$ with $\ldim(P)\ge d$,
such that $\ldim(B)\le 3$ whenever $B$ is a block in $P$.

The second of our two main results is the following theorem
showing that Boolean dimension behaves like Dushik-Miller dimension
and not like local dimension when it comes to blocks, i.e., we show
that the Boolean dimension of a poset is bounded in terms
of the maximum Boolean dimension among its blocks.

\begin{theorem}\label{thm:bdim-blocks}
Let $P$ be a poset with blocks
$B_1,B_2,\dots,B_t$.  If $d=\max\{\bdim(B_i):1\le i\le t\}$, then $\bdim(P)\le 19+d+18\cdot 2^d$. 
\end{theorem}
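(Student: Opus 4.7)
The plan is to lift the technique behind Theorem~\ref{thm:bdim-components} from the component setting to the block-cut tree setting, in four stages.

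First, I would set up the block-cut tree $T$ of the cover graph of $P$, rooted at an arbitrary block $B_0$. For every non-root block $B$, denote by $v_B$ the cut vertex through which $B$ attaches to its parent. Every element of $P$ is either a cut vertex or lies in a unique block, and a depth-first traversal of $T$ produces a skeleton linear order $\Lambda$ on $P$ in which the elements of each block appear as contiguous bursts separated only by shared cut vertices. This $\Lambda$ will play the same role as the component-ordering order in Theorem~\ref{thm:bdim-components}, and it already lets the Boolean function locate the unique top-most block of the subtree spanned by any pair $(x,y)$.

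Second, for each block $B$ I would fix a Boolean realizer $(\cgB_B,\tau_B)$ with $|\cgB_B|\le d$ and, by padding with copies, assume $|\cgB_B|=d$ for every $B$. I would then construct $d$ global linear orders $M_1,\ldots,M_d$ on $P$ whose restriction to each block $B$ agrees with the $i$-th member of $\cgB_B$. The parent-before-children rule inherited from $\Lambda$ forces a consistent placement of cut vertices, but simultaneously matching the intra-block orders and the relative positions of each cut vertex in its parent block will require some delicate gluing.

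Third, I would introduce $O(2^d)$ selector linear orders together with a constant number of structural orders derived from $\Lambda$. A pair $(x,y)$ of distinct elements of $P$ is either contained in a common block, in which case its poset relation is determined by $\tau_B$ applied to $q(x,y,\cgB_B)$, or else $x$ and $y$ lie in distinct blocks and are incomparable and must be recognized as such. The auxiliary orders should allow the single Boolean function $\tau$ to (i) decide whether $x$ and $y$ share a block, (ii) if so, recover the correct bit pattern $q(x,y,\cgB_B)$ and apply $\tau_B$, and (iii) otherwise return $0$.

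The main obstacle, I expect, lies in reconciling steps (ii) and (iii): a single global bit string produced by the $M_i$'s can arise from pairs in different blocks, yet the correct verdict may differ because each block carries its own Boolean function. Resolving this collision requires enough selector orders to tag every bit pattern with enough information about the block that produced it, which is what drives the $18\cdot 2^d$ term in the bound. The additive constant~$19$ then absorbs the fixed structural orders needed to navigate the block-cut tree and to handle the cases where one of $x,y$ is itself a cut vertex on the path between their home blocks, while the $d$ summand corresponds to the globalized block realizers. Verifying that these ingredients compose into a valid Boolean realizer is the technically delicate part but introduces no new conceptual ideas beyond those present in the proof of Theorem~\ref{thm:bdim-components}.
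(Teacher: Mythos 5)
The proposal is built on a false premise that sinks the whole plan. In your stage~3 you assert that if $x$ and $y$ lie in distinct blocks then they are incomparable in $P$. That is true for \emph{components} but false for \emph{blocks}: comparabilities propagate across cut vertices. For instance, in the chain $a<b<c<d<e$ (whose cover graph is a path), $a$ and $e$ lie in different blocks, neither is a cut vertex, yet $a<e$. This is precisely why the blocks theorem is much harder than the components theorem, and virtually the entire technical content of the paper's proof is devoted to handling these cross-block comparabilities. Your proposed decision procedure --- ``if $x$ and $y$ share a block, apply $\tau_B$; otherwise output $0$'' --- would therefore give the wrong answer on a huge class of pairs.

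Concretely, the missing idea is this: for a pair $(x,y)$ with $x,y$ in different blocks, comparability in $P$ is governed by the two ``gateway'' cut vertices $u=u(x,y)$ and $v=v(x,y)$ nearest to $x$ and $y$ in the highest block $B_i$ meeting every $x$--$y$ path, and one must first force (via auxiliary reversible sets and their reversing linear extensions) that $x<u$ and $v<y$, then determine whether $u<v$ inside $B_i$. That last step requires knowing $\tau_i$, but $i$ is generally the index of a block containing neither $x$ nor $y$, so the color of $x$ or $y$ does not reveal $\tau_i$ directly; the paper resolves this with the colorings $\phi_3,\phi_4$ based on maximal upward/downward chains of roots, and with $2\cdot 2^d$ auxiliary forest-posets $Q(\mathbb{S}_j)$ (each of dimension at most~$3$) to disambiguate which of the two candidate truth functions equals $\tau_i$. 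None of this machinery appears in the proposal, and the $18\cdot 2^d$ and constant terms you attribute to ``tagging bit patterns'' and ``cut-vertex corner cases'' do not correspond to what is actually needed. The merge rule for $\cgF_3$ and the idea of reusing per-block Boolean realizers are on the right track, but without addressing cross-block comparability the argument does not prove the theorem.
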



We doubt that the inequalities in Theorem~\ref{thm:bdim-components} and Theorem~\ref{thm:bdim-blocks}
are sharp, but in some sense they are not so far from the truth, as we
will show that for large $d$, there is a disconnected poset $P$ with
$\bdim(P)=2^{\Omega(d)}$ and $\bdim(C)\le d$ for every
component $C$ of $P$ (and hence $\bdim(B)\leq d$ for every block $B$ of $P$).

\section{Proofs}

In discussing Boolean realizers for a poset $P$,
the phrase ``a pair $(x,y)$'' will always refer to an ordered pair
of distinct elements of $P$. Trivially, a poset $P$ has Boolean dimension at most $d$, if $P$ has
a Boolean realizer $(\cgB,\tau)$ with $|\cgB|=d$. In defining a Boolean realizer, most of the work will
go into the construction of the linear orders in the family $\cgB$. Typically,
$\cgB$ will be made up of subfamilies of linear orders, each subfamily serving to reveal certain 
details concerning a pair $(x,y)$. As far as the Boolean formula $\tau$ is considered, rather than explicitly writing out the
rule for the function $\tau$, we will simply explain how we can determine 
whether $x$ is less than $y$ in $P$ based on the bits associated with
the linear orders in $\cgB$.  As we shall see, there are times when
we know whether $x<y$ in $P$ after seeing just a few of the
bits in $q(x,y,\cgB)$, while in other instances, we may need to see all
or nearly all of the bits.

We will make frequent use of two simple lemmas. Both are standard tools in the field, but we nevertheless include the short proofs, as they
are instructive for the more complex results to follow.

\begin{lemma}\label{lem:partition}
Let $P$ be a poset with ground set $X$, and let $\phi$ be
a $t$-coloring of $X$.
Then there is a family $\cgF=\{N_1,N_2\}$ of
two linear orders on $X$ so that
given a pair $(x,y)$ of distinct elements of $X$, we can determine
whether $\phi(x)$ is the same as $\phi(y)$ from the bits 
associated with the linear orders in $\cgF$.
\end{lemma}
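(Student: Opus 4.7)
The plan is to build the two linear orders in $\cgF$ so that the color classes act as blocks, and within each block the two orders disagree, while between distinct blocks they agree. Concretely, I will fix an arbitrary linear order $<_c$ on each color class $C_c = \phi^{-1}(c)$, and then define
\[
N_1 = [C_1 < C_2 < \cdots < C_t],
\]
with each $C_c$ internally ordered by $<_c$, and
\[
N_2 = [C_1^\star < C_2^\star < \cdots < C_t^\star],
\]
where $C_c^\star$ denotes $C_c$ with the internal order $<_c$ reversed. That is, the two orders agree on how the color classes are stacked, but within each class they are duals of one another.

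Next I will check how the bit string $q(x,y,\cgF)$ behaves. If $\phi(x) = \phi(y) = c$, then $x$ and $y$ lie in a common block $C_c$ in both orders, and since the internal orders are reversed between $N_1$ and $N_2$, exactly one of the two bits is $1$: the pattern is $01$ or $10$. If $\phi(x) \neq \phi(y)$, then $x$ and $y$ are separated into different blocks in both orders in the same way, since the block ordering is identical; hence both bits agree, giving pattern $00$ or $11$.

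Therefore the Boolean function that outputs $1$ exactly on the patterns $01$ and $10$ tells us from $q(x,y,\cgF)$ whether $\phi(x) = \phi(y)$. This gives the required family $\cgF = \{N_1, N_2\}$ of size two.

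There is essentially no obstacle here — the only thing to watch is to reverse the \emph{internal} orders rather than the order of the blocks, since reversing both would produce $N_2 = N_1^\star$ and then the two bits always disagree, erasing the distinction between same-color and different-color pairs.
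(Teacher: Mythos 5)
Your proof is correct and is essentially identical to the paper's: both fix an arbitrary order on each color class, stack the classes in the same order for both $N_1$ and $N_2$, reverse the internal order within each class for $N_2$, and observe that the two bits agree exactly when $x$ and $y$ lie in distinct color classes. Your closing caution about reversing the internal orders rather than the block order is a useful sanity check but does not change the argument.
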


\begin{proof}
We may, without loss of generality, assume $\phi$ uses the integers in $[t]$ as colors.
For each $i\in [t]$, let $X_i$ consist of all $x\in X$
with $\phi(x)=i$, and let $L_i$ be an arbitrary
linear order on $X_i$. Then set
\begin{align*}
N_1=\ &[L_1<L_2<L_3<\dots<L_{t-1}<L_t]\quad \text{and}\\
N_2=\ &[L^*_1<L^*_2<L^*_3<\dots<L^*_{t-1}<L^*_t].
\end{align*}
Now for the family $\cgF=\{N_1,N_2\}$, if $\phi(x)=\phi(y)$, we
will get either $(0,1)$ or $(1,0)$, but if $\phi(x)\neq\phi(y)$,
we will get either $(0,0)$ or $(1,1)$.\qed
\end{proof}

\begin{lemma}\label{lem:color}
Let $P$ be a poset with ground set $X$, and let $\phi$ be
a $t$-coloring of $X$.
Then there is a family $\cgF$ of $4\lceil\log_2 t\rceil$ linear orders
on $X$ so that given a pair $(x,y)$ of distinct elements of $X$, we can determine the pair
$(\phi(x),\phi(y))$ of colors from the bits associated with the
linear orders in $\cgF$.
\end{lemma}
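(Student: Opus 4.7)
The plan is to reduce from an arbitrary $t$-coloring to a $2$-coloring by encoding colors in binary. I would set $k=\lceil\log_2 t\rceil$ and identify the color palette $[t]$ with a subset of $\{0,1\}^k$. For each bit position $b\in[k]$, let $\psi_b\colon X\to\{0,1\}$ be the $2$-coloring returning the $b$th bit of $\phi$. It then suffices to produce, for each such $b$, a family $\cgF_b$ of four linear orders on $X$ whose associated $4$-bit pattern determines the pair $(\psi_b(x),\psi_b(y))$; taking $\cgF=\cgF_1\cup\cdots\cup\cgF_k$ and concatenating across $b\in[k]$ recovers $(\phi(x),\phi(y))$ using $4\lceil\log_2 t\rceil$ linear orders in total.

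For a single $2$-coloring with classes $A=\psi_b^{-1}(0)$ and $B=\psi_b^{-1}(1)$, I would fix arbitrary linear orders $\lambda_0$ on $A$ and $\lambda_1$ on $B$ and define
\begin{align*}
M_1&=[\lambda_0<\lambda_1], & M_2&=[\lambda_0^*<\lambda_1^*],\\
M_3&=[\lambda_0<\lambda_1^*], & M_4&=[\lambda_0^*<\lambda_1].
\end{align*}
A routine case analysis over the six configurations of $(x,y)$ with respect to $A$ and $B$ gives the following bit pattern for $\cgF_b=\{M_1,M_2,M_3,M_4\}$: $(1,1,1,1)$ if $x\in A$, $y\in B$; $(0,0,0,0)$ if $x\in B$, $y\in A$; one of $(1,0,1,0)$ or $(0,1,0,1)$ if both $x,y\in A$; and one of $(1,0,0,1)$ or $(0,1,1,0)$ if both $x,y\in B$. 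Since these six patterns are pairwise distinct, the pair $(\psi_b(x),\psi_b(y))$ can be read off from the four bits.

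The main obstacle is precisely the design of $\cgF_b$ so that the two same-class cases can be told apart. With only two linear orders, as in Lemma~\ref{lem:partition}, one learns whether $x,y$ share a class but not which one, since the comparison between $x$ and $y$ inside $A$ is entirely independent of the comparison inside $B$. The asymmetric choices in $M_3$ (only $B$ reversed) and $M_4$ (only $A$ reversed) break this symmetry: on pairs contained in $A$ one has $M_3|_A=M_1|_A$ and $M_4|_A=M_2|_A$, whereas on pairs contained in $B$ the roles swap to $M_3|_B=M_2|_B$ and $M_4|_B=M_1|_B$, producing exactly the parity flip needed to distinguish $A\times A$ from $B\times B$ while leaving the behavior on $A\times B$ and $B\times A$ unchanged.
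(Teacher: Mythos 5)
Your proof is correct and follows essentially the same approach as the paper: reduce to $\lceil\log_2 t\rceil$ binary colorings and, for each bit position, use four linear orders whose joint bit pattern distinguishes the six possible configurations. The paper's four orders for a given bit vary which block comes first and whether the bit-$j$ block is reversed (yielding patterns $(1,1,0,0)$, $(0,0,1,1)$, $(a,1-a,a,1-a)$, $(a,a,a,a)$), whereas yours keep $A$ before $B$ throughout and vary which sublist is reversed (yielding $(1,1,1,1)$, $(0,0,0,0)$, $(a,1-a,a,1-a)$, $(a,1-a,1-a,a)$); both are valid realizations of the same parity trick for telling the two same-class cases apart, and your case analysis checks out.
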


\begin{proof}
Setting $r=\lceil\log_2 t\rceil$, we may, without loss of generality, assume $\phi$ uses the subsets of $[r]$ as colors.
For each $j\in[r]$, let $X_j$ consist of all
$u\in X$ with $j\in \phi(u)$ and let $L_0$ be an arbitrary linear order on $X$.
Then for each $j\in [r]$, we add the following four linear orders
to the family $\cgF$:
\begin{align*}
M_1(j)=\ &[L_0(X_j)<L_0(X\setminus X_j)]\\
M_2(j)=\ &[L_0^*(X_j)<L_0(X\setminus X_j)]\\
M_3(j)=\ &[L_0(X\setminus X_j)< L_0(X_j)]\\
M_4(j)=\ &[L_0(X\setminus X_j)< L_0^*(X_j)]
\end{align*}
If $j\in\phi(x)$ and $j\in\phi(y)$, then the bits for the query will
either be $(1,0,1,0)$ or $(0,1,0,1)$.  If $j\in\phi(x)$ and
$j\not\in\phi(y)$, then the bits will be $(1,1,0,0)$.
Conversely, if $j\notin \phi(x)$ and $j\in\phi(y)$, then
we will have $(0,0,1,1)$. Finally, if $j\not\in\phi(x)$ and
$j\not\in\phi(y)$, then the query will return either
$(1,1,1,1)$ or $(0,0,0,0)$. Hence the $4r$ linear orders
together will clearly enable us to determine the pair $(\phi(x),\phi(y))$.\qed
\end{proof}

\subsection{Boolean Dimension and Components}

In this subsection, we prove Theorem~\ref{thm:bdim-components}.

\begin{proof}
Let $P$ be a disconnected
poset with components $C_1,C_2,\dots,C_t$, and assume that
$\bdim(C_i)\le d$, for each $i\in[t]$. Let $X$ be the ground set of $P$, and for each $i\in[t]$ let
$X_i$ be the ground set of the component $C_i$. Furthermore, for each $i\in [t]$,
let $(\cgB_i,\tau_i)$ be a Boolean realizer of $C_i$ with  $|\cgB_i|=d$. We label the linear orders in $\cgB_i$ as
$\{L_j^i:j\in[d]\}$.

We now show that $P$ has a Boolean realizer $(\cgB,\tau)$, with
$|\cgB|=2+d+4\cdot 2^d$.
The family $\cgB$ will be the union
\begin{equation*}
\cgB=\cgF_1\cup\cgF_2\cup\cgF_3,
\end{equation*}
where
\begin{align*}
|\cgF_1|&=2, & |\cgF_2|&=4\cdot 2^d,  & |\cgF_3|&=d.
\end{align*}

We begin by defining a coloring $\phi_1:X\rightarrow[t]$ by setting 
$\phi_1(x)=i$ when $x\in X_i$.  We use Lemma~\ref{lem:partition} to 
determine a family $\cgF_1$ of size~$2$ such that for each pair $(x,y)$,
the bits for the linear orders in $\cgF_1$
determine whether $\phi_1(x)$ is equal to $\phi_1(y)$.

Next consider the set $\mathbb{T}=\{\tau_i:i\in[t]\}$. Although the integer $t$ is not bounded in
terms of $d$, the size of $\mathbb{T}$ is at most $2^{2^d}$. Therefore, the coloring $\phi_2:X\rightarrow
\mathbb{T}$ defined by setting $\phi_2(x)=\tau_i$ when
$x\in X_i$ uses at most $2^{2^d}$ colors. 
Using Lemma~\ref{lem:color},  we take $\cgF_2$ as a family of
$4\cdot 2^d$ linear orders on $X$ so that given a pair $(x,y)$, 
we can determine the pair $(\phi_2(x),\phi_2(y))$ from the bits associated 
with the linear orders in $\cgF_2$.

Finally for each $j\in[d]$, let $L_j$ be a linear order on $X$ such that
for each $i\in[t]$, the restriction of $L_j$ to $X_i$ is $L_j^i$, and take $\cgF_3=\{L_j:j\in[d]\}$.

Now let $(x,y)$ be a pair.
From the bits associated with the linear orders in
$\cgF_1$, we know whether $x$ and $y$ are in the same component or not. 
If not, then we know that $x$ and $y$ are incomparable in $P$.  So 
we can assume that we have learned that $x$ and $y$ are in the 
same component. Next, from the bits associated with the linear orders in
$\cgF_2$, we can learn the common color $\phi_2(x)=\phi_2(y)$, which
is the truth function $\tau_i$ for the component $C_i$ containing both
$x$ and $y$. Then we can apply the truth function $\tau_i$ to the
bits for the linear orders in $\cgF_3$. Since the restriction of these linear orders to $X_i$ is $\cgB_i$, this will finally answer whether
$x$ is less than $y$ in $C_i$, and hence in $P$. This finishes the proof of Theorem~\ref{thm:bdim-components}.\qed
\end{proof}

Now we explain why the bound in Theorem~\ref{thm:bdim-components} cannot be improved
dramatically.
Consider a large integer $n$ and the family $\mathbb{P}_n$ of all
posets $P$ of height at most~$2$ on the ground set $X=A\cup B$, with all elements of $A=\{a_1,a_2,\dots,a_n\}$ being minimal
in $P$ and all elements of $B=\{b_1,b_2,\dots,b_n\}$ being maximal in $P$.  Clearly, there are $2^{n^2}$ such posets, since
for each pair $(a_i,b_j)\in A\times B$, we can choose whether
or not $a_i<b_j$ in $P$.

In~\cite{bib:NesPud}, Ne\v{s}et\v{r}il and Pudl\'ak show that
if $P$ is a poset on $2n$ points, then $\bdim(P)\le c\log_2 n$ for some universal constant $c$, and
they basically use the family $\mathbb{P}_n$ to show that this inequality
is essentially best possible. This follows from the fact that there are not more then $\left((2n)!\right)^s2^{2^s}$ Boolean realizers with $s$ linear orders, and hence if $\bdim(P)\le s$ for all $P\in\mathbb{P}_n$, then we must have $\left((2n)!\right)^s2^{2^s}\ge |\mathbb{P}_n|=2^{n^2}$. However, this implies that $s=\Omega(\log_2 n)$.  

Now consider the disconnected poset $P$ formed by taking the
disjoint sum of a copy of each poset in $\mathbb{P}_n$.  Setting
$d=c\log_2 n$, we then have $\bdim(C)\le d$ for every component $C$
of $P$.  On the other hand, we claim that 
$\bdim(P)=2^{\Omega(d)}$. To see this, suppose that
$\bdim(P)=m$ and let $(\cgB,\tau)$ be a Boolean realizer for
$P$ with $|\cgB|=m$. Now let $Q$ be any poset from $\mathbb{P}_n$, and
let $\cgB_Q$ be the family of linear orders obtained
by taking the restrictions of the linear orders in $\cgB$ to
the ground set of $Q$.  Then $(\cgB_Q,\tau)$ is a Boolean
realizer for $Q$. Since $\tau$ is now fixed, the number of realizers we can produce in such a way is at most $\left((2n)!\right)^m$. However, then we must have $(2n!)^m\ge |\mathbb{P}_n|=2^{n^2}$, which implies 
$m=\Omega\left(\frac{n}{\log_2 n}\right)=2^{\Omega(d)}$.

\subsection{Boolean Dimension and Blocks}

In this subsection, we prove Theorem~\ref{thm:bdim-blocks}.
To start with, we first describe one of the key ideas, extracted from~\cite{bib:TrWaWa}.
In the argument to follow, we will encounter the following
situation.  We will have a poset $P$ with a cut vertex $w$ and two connected convex 
subposets $Q$ and $Q'$, such that their ground sets, $Y$ and
$Y'$, respectively, share only the element $w$. Then clearly the subposet $Q''$ of $P$
with ground set $Y''=Y\cup Y'$ is connected and convex, and the
point $w$ is a cut vertex of $Q''$. Then if $L=[A<w<B]$ and $L'=[C<w<D]$ are linear orders of $Y$ and $Y'$,
respectively, there are many ways to determine a linear order
$L''$ on $Y''$ such that $L''(Y)=L$ and $L''(Y')=L'$.  However,
in our argument, we will \emph{always} do it using the following \emph{merge rule}:\quad $L''=[A<C<w<D<B]$.  It is important to note
that this choice forces points in $A\cup B$ to the
``outside'' while concentrating points of $C\cup D$ in the
``inside''.

Now on to the proof. First let $P$ be a connected poset with
$\bdim(B)\le d$ for every block $B$ of $P$.  We will build
a Boolean realizer $(\cgB,\tau)$ for $P$ with 
$|\cgB|\le 17+d+18\cdot2^d$.  The family $\cgB$ will be the
union
\begin{equation*}
\cgB=\cgF_1\cup\cgF_2\cup\dots\cup\cgF_{11}
\end{equation*}
where:
\begin{align*}
|\cgF_1|&=|\cgF_5|=|\cgF_6|=2,  &|\cgF_2|&=|\cgF_8|=|\cgF_9|=4\cdot 2^d,  & |\cgF_3|&=d\\
|\cgF_4|&= 3  & |\cgF_7|&=|\cgF_{11}|=4,
\end{align*}
and $\cgF_{10}$ is the union of $2\cdot 2^d$ families, each of size~$3$. As a consequence, we will have $|\cgB|=17+d+18\cdot2^d$, as required.

Let $\mathbb{B}$ denote the set of all blocks of $P$, and let $t=|\mathbb{B}|$.  We may assume $t\ge2$, otherwise
$P$ itself is a block and $\bdim(P)\le d$.
Let $\mathbb{B}=\{B_1,B_2,\dots,B_t\}$ be a labelling of the blocks in 
$P$ so that whenever $2\le i\le t$, block $B_i$ has a 
(necessarily unique) point in common with $B_1\cup B_2\cup\dots\cup B_{i-1}$.
This point will be called the \textit{root} of $B_i$ and denoted $\rho(B_i)$.
The block $B_1$ does not have a root. For each $i\in[t]$, we put $X_i$ for the ground set of $B_i$, and
we let $Y_i=X_1\cup X_2\cup\dots\cup X_i$.  We also set $Z_1=X_1$ and $Z_i=X_i-\{\rho(B_i)\}$ for $2\le i\le t$. Then clearly
$Z_1\cup Z_2\cup\dots\cup Z_t$ is a partition of $X$.

The first part of the proof will closely parallel the argument
for Theorem~\ref{thm:bdim-components}.  As before, we first define a coloring $\phi_1:X\rightarrow[t]$ by setting $\phi_1(u)=i$ when $u\in Z_i$.
Using Lemma~\ref{lem:partition}, we then 
take $\cgF_1$ as a family of two linear orders so that
given a pair $(x,y)$, the bits for the linear orders in $\cgF_1$ determine whether $\phi_1(x)=\phi_1(y)$.

For each $i\in[t]$, let $(\cgB_i,\tau_i)$ be a Boolean realizer
for $B_i$ with $|\cgB_i|=d$. Then we again take the set $\mathbb{T}=\{\tau_i:i\in[t]\}$ which has size at most $2^{2^d}$, and consider the coloring $\phi_2:X\rightarrow
\mathbb{T}$ defined by setting $\phi_2(x)=\tau_i$ when
$x\in Z_i$. Just as before, this is a coloring using at most $2^{2^d}$ colors, 
so using Lemma~\ref{lem:color},  we can take $\cgF_2$ to be the family of
$4\cdot 2^d$ linear orders on $X$ so that given a pair $(x,y)$, 
we can determine the pair $(\phi_2(x),\phi_2(y))$ from the bits associated 
with the linear orders in $\cgF_2$.

Now we label the linear orders
in $\cgB_i$ as $\{L_j^i:j\in[d]\}$.  
Recall that in the proof of Theorem~\ref{thm:bdim-components}, we chose a family 
$\{L_j:j\in[d]\}$ of linear orders on $X$ such that for each 
$(i,j)\in[t]\times[d]$, the restriction of $L_j$ to $X_i$ is 
$L_j^i$.  Here we must be more careful in the construction of 
these linear orders. For each $j\in[d]$, we define a linear order $L_j$ on $X$ using
the following recursive procedure.  First, set $L_j(1)=L_j^1$.
Then suppose that for some $k\in[t-1]$, we have already defined a
linear order $L_j(k)$ on the set $Y_k$.  
Now let $w=\rho(B_{k+1})$. Then $w$ is both in $Y_k$ and $X_k$, so there is a suitable $A$ and $B$ such that $L_j(k)=[A<w<B]$,  
and a suitable $C$ and $D$ such that $L_j^{k+1}=[C<w<D]$.  We now can define $L_j(k+1)$ by the merge rule discussed previously, i.e., we put
$L_j(k+1)= [A<C<w<D<B]$. At the end, when this procedure stops, we take $L_j=L_j(t)$ and set $\cgF_3=\{L_j:j\in[d]\}$. 
Note that for each pair $(i,j)\in[t]\times [d]$, the restriction 
of $L_j$ to $X_i$ is still $L_j^i$.

We summarize what we have accomplished with the families
$\cgF_1$, $\cgF_2$ and $\cgF_3$.  Let $(x,y)$ be a pair.
If there is some $i\in[t]$ such that $x,y\in Z_i\subseteq X_i$, then
this fact will be detected by the linear orders in $\cgF_1$.
The linear orders in $\cgF_2$ will then detect the truth-function
$\tau_i=\phi_2(x)=\phi_2(y)$ of $B_i$. Then, as the restriction of the linear orders in $\cgF_3$ is $\cgB_i$, we can determine whether
$x$ is less than $y$ in $P$ simply by applying $\tau_i$ to the
bits for the linear orders in $\cgF_3$.  As a consequence, for the balance 
of the argument, from now on we restrict our attention to pairs
$(x,y)$ satisfying the following property.

\begin{property}\label{prop:1}
$\phi_1(x)\neq\phi_1(y)$, i.e., there
is no $i\in[t]$ for which $x,y\in Z_i$.
\end{property}

Next we define a digraph, which we will call the \textit{root digraph of} $P$. 
Its vertex set is $X$ and for each $2\le i\le t$,
we have an edge between $\rho(B_i)$ and every
$u\in Z_i$ if $u$ is comparable with $\rho(B_i)$ in $P$.
The edge is directed from $u$ to $\rho(B_i)$ when $u<\rho(B_i)$
in $P$ and it is directed from $\rho(B_i)$ to $u$ when
$\rho(u_i)<u$ in $P$.  Evidently, the root digraph of $P$ is a 
directed forest.

The root digraph determines a poset $Q$ whose ground set
is $X$ and $u$ is covered by
$v$ in $Q$ when there is an edge from $u$ to $v$ in the
root digraph.  Evidently, the poset $Q$ is a ``forest'', i.e.,
there are no cycles in the cover graph of $Q$.  A well known
theorem of Trotter and Moore~\cite{bib:TroMoo} asserts that
the dimension of a poset whose cover graph is a forest is at most~$3$, so we add to $\cgB$
a family $\cgF_4$ which is a realizer of size~$3$ for $Q$.

Clearly, $Q$ is a suborder of $P$, i.e., if $x<y$ (resp. $x>y$) in $Q$, as
detected by the bits for $\cgF_4$ being $(1,1,1)$ (resp. $(0,0,0)$), then
$x<y$ (resp. $x>y$) in $P$, so for the balance of the argument, we restrict
our attention to pairs $(x,y)$ which also satisfy the following.
property

\begin{property}\label{prop:2}
$x\parallel y$ in $Q$, i.e., the bits
for the linear orders in $\cgF_4$ are not $(1,1,1)$ or $(0,0,0)$.
\end{property}

Now let $(x,y)$ be a pair satisfying Property~1 and~2. We first decide on the relative position of $x$ and $y$ in the cover graph $G$ of $P$. For this observe that there is a natural tree structure $T$ on the $Z_i$'s, with $Z_{i}$ being a neighbour of $Z_{j}$ when $\rho(B_{i})\in Z_{j}$ or $\rho(B_{j})\in Z_{i}$. We consider $T$ as rooted at $Z_1$ and also fix a planar upward drawing of $T$. Then for every vertex of $T$ there is a natural left-to-right ordering of its children. Let $i_x\neq i_y$ be such that $x\in Z_{i_x}$ and $y\in Z_{i_y}$. We can distinguish four cases:
\begin{itemize}
\item ``$x$ is below $y$" if $Z_{i_x}$ is on the path from $Z_{i_y}$ to $Z_1$ in $T$,
\item ``$y$ is below $x$" if $Z_{i_y}$ is on the path from $Z_{i_x}$ to $Z_1$ in $T$,
\item ``$x$ is left of $y$" if there is some $Z_i$ which has two children $Z_j$ and $Z_k$, with $Z_j$ left of $Z_k$, such that $Z_{i_x}$ is in the subtree rooted at $Z_j$ and $Z_{i_y}$ is in the subtree rooted at $Z_k$, and
\item ``$y$ is left of $x$" which is defined analogously.
\end{itemize}
To identify which case are we in, let $[Z_{i_1},Z_{i_2},\dots,Z_{i_t}]$ be the left-to-right and $[Z_{j_1},Z_{j_2},\dots,Z_{j_t}]$ the right-to-left depth-first search order of the $Z_i$'s according to $T$. For evety $i\in [t]$ choose an arbitrary linear order $M_i$ on the elements of $Z_i$ and put 
\[\cgF_5=\Big\{[M_{i_1}<M_{i_2}<\cdots < M_{i_t}],[M_{j_1}<M_{j_2}<\cdots < M_{j_t}]\Big\}.\]
Then, looking at the bits associated to the linear orders in $\cgF_5$, we see $(1,1)$ exactly if $x$ is below $y$, $(0,0)$ exactly if $y$ is below $x$, $(1,0)$ exactly if $x$ is left of $y$ and $(0,1)$ exactly if $y$ is left of $x$. In what follows, we handle all four cases separately, but, before doing so, we introduce some further notation. 

For each $i\in [t]$ and each $u\in X_i$, we 
define the \textit{tail of $u$ from $X_i$}, denoted
$T(u,X_i)$, as the set of all points
$v\in X$ with the property that every path in the cover graph $G$ of $P$ starting
at $v$ and ending at a point of $X_1$ contains $u$.  Note
that, in particular, $u\in T(u,X_i)$ and $T(u,X_i)$ is a set of consecutive elements in each linear order $L_j\in\cgF_3$.

We put $\cut(x,y)$ for the set 
of cut vertices in the cover graph $G$ of $P$ which are on every path from $x$ to $y$ in $G$. Since there is no $i\in[t]$ for which $x,y\in Z_i$, $\cut(x,y)$ is clearly nonempty.
Let $i=i(x,y)$ denote
the least $j\ge1$ such that $\cut(x,y)
\cap X_j\neq \emptyset$, $u=u(x,y)$ the element of $\cut(x,y)\cap
Z_i$ which is closest to $x$ in $G$, while $v=v(x,y)$ the
element of $\cut(x,y)\cap Z_i$ which is closest to $y$ in $G$.

\medskip

\noindent \textbf{Case ``$x$ below $y$"} Suppose we learned from the family $\cgF_5$ that $x$ is below $y$ and let
\begin{equation*}
S=\Big\{(x,y)\in \Inc(P)\ : \ x\text{ is below }y\text{ and }v=v(x,y)\not< y\text{ in }P\Big\}.
\end{equation*}
We claim that $S$ is reversible. Indeed, suppose to the contrary that $S$ is not reversible, hence it must contain some alternating cycle $\{(x_\alpha,y_\alpha)\ : \ \alpha\in [k]\}$. For $\alpha\in [k]$ we have $x_\alpha\leq y_{\alpha+1}$ and $v_{\alpha+1}=v(x_{\alpha+1},y_{\alpha+1})\not\leq y_{\alpha+1}$, which is possible only if $x_{\alpha+1}$ is below $x_\alpha$. Clearly, this statement cannot hold for all $\alpha\in [k]$ (cyclically).

Let $M$ be the linear extension reversing $S$. We clearly have $x\not<y$ unless the bit for $M$ is $1$, which we assume from now on. In this case $x$ and $v=v(x,y)$ must necessarily be different, otherwise we would get a contradiction either with Property 2 (if $x$ and $y$ are comparable) or with the fact that the bit corresponding to $M$ is $1$ (if $x$ and $y$ are incomparable). As $x$ is below $y$, we have $i_x=i(x,y)$. Then, as $y\in T(v,X_{i_x})$ and $T(v,X_{i_x})$ is a set of consecutive elements (not containing $x$) in each of the $L_i$'s, we have that the relative position of $x$ and $y$ in $L_i$ is always the same as that of $x$ and $v$, i.e., $q(x,v,\cgF_3)=q(x,y,\cgF_3)$. On the other hand, using the family $\cgF_2$ we already learned what $\tau_{i_x}$ is, and we also know that the restriction of $\cgF_3$ to $X_{i_x}$ is $\cgB_{i_x}$. Hence we can apply $\tau_{i_x}$ to the bit string $q(x,y,\cgF_3)=q(x,v,\cgF_3)=q(x,v,\cgB_{i_x})$ to decide whether $x<v$ or not. If not, we clearly also have $x\not< y$. On the other hand, we claim that if we arrive at $x<v$, then this already implies $x<y$. Indeed suppose to the contrary that $x\not< y$. As $M$ is a linear extension and the corresponding bit is $1$, we know that $y\not < x$, hence necessarily $x\parallel y$. However, as $(x,y)$ was not reversed by $M$ we must have $v< y$ and hence $x< v< y$.

\medskip

\noindent \textbf{Case ``$y$ below $x$"} This case can clearly be handled in a symmetric manner involving some other linear extension $M'$. 

\medskip

To cover these two cases we add to $\cgB$ the family $\cgF_6=\{M,M'\}$.

\medskip

\noindent \textbf{Case ``$x$ left of $y$"} Suppose we learned from the family $\cgF_5$ that $x$ is left of $y$. This in particular implies that $x\neq u(x,y)$ and $y\neq v(x,y)$.

Let $I(P)$ denote the set of all pairs $(x,y)$ which satisfy 
Property~1 and~2, with $x$ left of $y$ and $x\parallel y$ in $P$.  Using the
linear order $L_1$ from $\cgF_2$, we define four subsets $R_1,R_2,
R_3,R_4$ of $I(P)$ as follows:

\begin{enumerate}
\item[(1)] $R_1$ consists of all pairs $(x,y)\in I(P)$ such that
$u(x,y) \leq v(x,y)$ in $L_1$ and $x\not< u(x,y)$.
\item[(2)] $R_2$ consists of all pairs $(x,y)\in I(P)$ such that
$u(x,y) \leq v(x,y)$ in $L_1$ and $v(x,y)\not< y$.
\item[(3)] $R_3$ consists of all pairs $(x,y)\in I(P)$ such that
$u(x,y) \geq v(x,y)$ in $L_1$ and $x\not< u(x,y)$. 
\item[(4)] $R_4$ consists of all pairs $(x,y)\in I(P)$ such that
$u(x,y) \geq v(x,y)$ in $L_1$ and $v(x,y)\not < y$.
\end{enumerate}

We claim that each set in $\{R_1, R_2, R_3, R_4\}$ is reversible.  
We give the argument for $R_1$, as
it is clear that the reasoning for the other three cases
is symmetric.  Suppose to the contrary that $R_1$ is not reversible and hence it contains an alternating
cycle $\{(x_\alpha,y_\alpha): \alpha\in[k]\}$.  Let $\alpha\in [k]$ and $i_{\alpha}=i(x_\alpha,y_\alpha)$, $u_{\alpha}=u(x_\alpha,y_\alpha)$,
$v_{\alpha}=v(x_\alpha,y_\alpha)$. Recall that $x_\alpha$ is left of $y_\alpha$ and $x_\alpha\not<u_\alpha$ hence we can have $x_\alpha\le y_{\alpha+1}$ in $P$ only if $y_{\alpha+1}$ is also left of $y_\alpha$. Clearly, this statement cannot hold for all $\alpha\in [k]$ (cyclically).

For $j\in [4]$ let then $N_j$ be a linear extensions of $P$ that reverses all pairs
in $R_j$ and set $\cgF_7=\{N_1,N_2,N_3,N_4\}$. Then given a pair $(x,y)$, we conclude that
$x\not< y$ in $P$ unless the bits
for the linear orders in $\cgF_7$ are $(1,1,1,1)$.
So for the balance of the argument, we restrict our attention to
pairs $(x,y)$ which also satisfy the following property:

\begin{property}\label{prop:3}
The bits for the linear orders in $\cgF_7$ are
$(1,1,1,1)$.
\end{property}

Let $(x,y)$ be a pair satisfying Property~1 through~3, 
and let $i=i(x,y)$, $u=u(x,y)$ and $v=v(x,y)$. We claim that the properties enforced on $(x,y)$ imply that $x<u$, $v<y$ in $P$ and $u\neq v$. We give the argument for $x<u$, the reasoning for $v<y$ is clearly symmetric. Suppose to the contrary that $x\not< u$ and hence $x\not< y$. As the linear orders in $\cgF_7$ are linear extensions and the corresponding bits are $(1,1,1,1)$, we know that $y\not< x$, so necessarily we have $x\parallel y$. However then $(x,y)$ is either in $R_1$ or in $R_3$ and hence has to be reversed by either $N_1$ or $N_3$ contradicting Property~3.  Finally to see that $u\neq v$ just note that otherwise we would have $x<u=v<y$ and hence $x<y$ in $Q$ which would contradict Property~2.  

As a consequence we have $x<y$ in $P$ if and only
if $u<v$ in $P$. As $x$ is left of $y$, we clearly have $x\in T(u,X_i)$ and $y\in T(v,X_i)$. Moreover, as $u\neq v$, $T(u,X_i)$ and $T(v,X_i)$ are disjoint intervals in each of the linear orders in $\cgF_3$, therefore $q(x,y,\cgF_3)=q(u,v,\cgF_3)$. Also, by the properties of $\cgF_3$, we have that $q(u,v,\cgF_3)=q(u,v,\cgB_i)$ and hence we can determine whether $u<v$ in $P$ by applying $\tau_i$
to these bits. The rub in these observations is that, in
general, we do not have any apparent method for determining
$\tau_i$.  Accordingly, our goal for the remainder of the argument
is to work around this difficulty.

Given any $a\in X$, we define a uniquely determined
pair $(\sigma_1(a),\sigma_2(a))$ by the following rule.
For $\sigma_1(a)$, we consider sequences of the form
$W=(w_0,w_1,\dots,$ $w_m)$ where 
\begin{enumerate}
\item[(1)] $w_0=a$ and 
\item[(2)] if $0\le j<m$
and $w_j\in Z_i$, then $w_{j+1}=\rho(B_i)$ and $w_j<w_{j+1}$ in $P$.
\end{enumerate}
Among all such
sequences, it is easy to see that there is a largest non-negative
integer $m$ and a uniquely determined element $v\in X$ for
which there is a sequence of this form with $w_m=v$.
We then set $\sigma_1(a)=v$. When $a\neq\sigma_1(a)$,
we have $a<\sigma_1(a)$ in $P$. The definition for $\sigma_2(a)$ is symmetric and when
$a\neq\sigma_2(a)$, we have $a>\sigma_2(a)$.

Now we define two further colorings $\phi_3,\phi_4:X\rightarrow \mathbb{T}$ 
as follows. For $a\in X$ we put $\phi_3(a)=\tau_i$ and $\phi_4(a)=\tau_j$ if $\sigma_1(a)\in Z_i$ and $\sigma_2(a)\in Z_j$. Let $\cgF_8$ and $\cgF_9$ be the families of $4\cdot2^d$ linear orders guaranteed by Lemma~\ref{lem:color} so that given
a pair $(x,y)$, we can determine $(\phi_3(x),\phi_3(y))$ and $(\phi_4(x),\phi_4(y))$ by looking at the bits for the family $\cgF_8$ and $\cgF_9$, respectively.

For each $i\in[t]$, let us fix an arbitrary linear extension $L_0^i$ of 
$B_i$. Then for every subset $\mathbb{S}$ of $\mathbb{T}$,
we define a poset $Q(\mathbb{S})$ with ground set $X$ by describing its cover relations. A point $a$ is covered by a point $b$ in $Q(\mathbb{S})$ if either of 
the following conditions are satisfied:

\begin{enumerate}
\item[(1)] There is $i\in[t]$ such that $a,b\in X_i$, $\tau_i\notin\mathbb{S}$ and the root digraph contains an edge from $a$ to $b$.
\item[(2)] There is $i\in[t]$ such that $a,b\in X_i$, $\tau_i\in\mathbb{S}$ and $a$ is covered by $b$ in $L_0^i$.
\end{enumerate}

\begin{lemma}\label{lem:qs}
For every $\mathbb{S}\subseteq\mathbb{T}$,
$\dim(Q(\mathbb{S}))\le 3$.
\end{lemma}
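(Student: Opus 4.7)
The plan is to show that the cover graph of $Q(\mathbb{S})$ is a forest, and then invoke the theorem of Trotter and Moore (already used to build $\cgF_4$) to conclude $\dim(Q(\mathbb{S})) \le 3$. First, $Q(\mathbb{S})$ really is a poset: every declared cover pair ``$a$ covered by $b$'' satisfies $a<b$ in $P$, since in case~(1) root-digraph edges are oriented according to the order in $P$, and in case~(2) $L_0^i$ is a linear extension of $B_i$. Hence no chain of declared covers can close up into a directed cycle, and the transitive closure is a valid partial order on $X$.

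Let $H$ be the undirected graph on $X$ whose edges are the declared cover pairs. The cover graph of $Q(\mathbb{S})$ is a subgraph of $H$: if $(a,b)$ were a cover of $Q(\mathbb{S})$ that is not a declared cover pair, then $a<b$ in $Q(\mathbb{S})$ would have to be witnessed by a chain of at least two declared relations, producing an element strictly between $a$ and $b$ and contradicting that $(a,b)$ is a cover. So it suffices to prove that $H$ is a forest.

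For each block $B_i$, the edges of $H$ lying inside $X_i$ form either a subset of a star centered at $\rho(B_i)$ (when $\tau_i \notin \mathbb{S}$, and the empty graph if $i=1$) or a Hamiltonian path on $X_i$ (when $\tau_i \in \mathbb{S}$); in either case the within-block subgraph is acyclic. The blocks of $P$ meet pairwise in at most one point and the auxiliary graph on blocks joined by shared cut vertices is a tree, so any cycle in $H$ would have to be contained in a single block, which is impossible. Therefore $H$ is a forest, and the theorem of Trotter and Moore yields $\dim(Q(\mathbb{S}))\le 3$. The main subtlety is the passage from the declared cover pairs to the actual covers of $Q(\mathbb{S})$, handled in the second paragraph; once that reduction is secured, the forest verification is a routine consequence of the block decomposition of $P$.
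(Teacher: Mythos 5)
Your proposal is correct and follows the same route as the paper: show the cover graph of $Q(\mathbb{S})$ is a forest and invoke Trotter--Moore. You are in fact a bit more careful than the paper's one-sentence argument, correctly observing that the cover graph of $Q(\mathbb{S})$ is a \emph{subgraph} of the graph $H$ of declared cover pairs, and that the within-block restriction is a \emph{subgraph} of a star (only edges to elements comparable with $\rho(B_i)$, and the empty graph on $X_1$ when $\tau_1\notin\mathbb{S}$) rather than a full star as the paper loosely states; the only minor imprecision is that the block adjacency graph need not be a tree when three or more blocks share a cut vertex, but the intended conclusion---that any cycle of $H$ must repeat a cut vertex and hence lie inside a single block---is still valid.
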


\begin{proof}
By the result of Trotter and Moore~\cite{bib:TroMoo} mentioned earlier, to prove that $\dim(Q(\mathbb{S}))\le 3$ it is enough to show that the cover graph of $Q(\mathbb{S})$ is a forest. This is easily seen, as for $i\in [t]$, the restriction of the cover graph of $Q(\mathbb{S})$ to $X_i$ is either a star centered at $\rho(B_i)$ (if $\tau_i\notin \mathbb{S}$) or a path of length $|X_i|$ (if $\tau\in \mathbb{S}$). 
\qed
\end{proof}

A classic result of R\'enyi~\cite{bib:Re} says that given a finite set $A$ there is always a family $\mathcal{A}$ of $\lceil\log_2 |A|\rceil$ subsets of $A$ that separates every pair of elements, i.e., for every distinct $a,b\in A$ there is some set in $\mathcal{A}$ which contains exactly one of them. By adding the complement of every set in $\mathcal{A}$ we arrive at a family $\mathcal{A}'$ of size $2\lceil\log_2 |A|\rceil$ with the property that for every ordered pair of distinct elements $(a,b)\in A^2$ there is some set in $\mathcal{A}'$ which contains $a$ but does not contain $b$. 
By applying this result to $A=\mathbb{T}$, fix a family $\mathcal{S}=\{\mathbb{S}_1,\mathbb{S}_2,\dots,\mathbb{S}_m\}$ of
subsets of $\mathbb{T}$ of size $m=2\cdot 2^d$, so that for every ordered pair
$(\tau_\alpha,\tau_\beta)$ of distinct elements of $\mathbb{T}$,
there is some $j\in[m]$ such that $\tau_\alpha\in\mathbb{S}_j$ and
$\tau_\beta\not\in\mathbb{S}_j$. For each $j\in[m]$ also fix a realizer of 
size~$3$ for the poset $Q(\mathbb{S}_j)$, guaranteed by Lemma~\ref{lem:qs}, and let $\cgF_{10}$ be the union of all these $2\cdot 2^d$ realizers. They enable us to determine the relation of any $x$ and $y$ in $Q(\mathbb{S}_j)$ for each $j\in[m]$.

Now let $(x,y)$ be again a pair satisfying Property~1 through~3 with $i=i(x,y)$, $u=u(x,y)$, $v=v(x,y)$, $\tau_\alpha=\phi_3(x)$
and $\tau_\beta=\phi_4(y)$. Further let $W_x$ and $W_y$ be the sequences witnessing $\sigma_1(x)$ and $\sigma_2(y)$. As $x<u$ and $v<y$, we clearly have that $u$ and $v$ are in the sequences $W_x$ and $W_y$, respectively. Furthermore, we also have that $\rho(B_i)$ is in at most one of the sequences $W_x$ and $W_y$, as otherwise we would have $x<y$ in $Q$, which would contradict Property~2. This clearly implies that at least one of the following statements holds:
\begin{enumerate}
\item[(1)] $\sigma_1(x)=u$ and so $\tau_\alpha=\tau_i$.
\item[(2)] $\sigma_2(y)=v$ and so $\tau_\beta=\tau_i$.
\end{enumerate}

If $\tau_\alpha=\tau_\beta$, then necessarily $\tau_\alpha=\tau_\beta=\tau_i$, so the answer
as to whether $x<y$ in $P$ is given by applying the truth-function
$\tau_i$ to the bits for the linear orders in $\cgF_3$.
Therefore it remains to consider the case where $\tau_\alpha\neq\tau_\beta$.

Using the properties of $\mathcal{S}$, let $j_1$ and $j_2$ be distinct integers in $[m]$ such
that $\tau_\alpha$ belongs to $\mathbb{S}_{j_1}$ but not
to $\mathbb{S}_{j_2}$, while
$\tau_\beta$ belongs to $\mathbb{S}_{j_2}$ but not
to $\mathbb{S}_{j_1}$. Then, by the definition of the posets $Q(\mathbb{S}_{j_1})$ and $Q(\mathbb{S}_{j_2})$, if $\sigma_1(x)=u$ and $\tau_\alpha=\tau_i$ then we have $x\parallel y$
in $Q(\mathbb{S}_{j_2})$, while if $\sigma_2(y)=v$ and $\tau_\beta=\tau_i$ then we have $x\parallel y$
in $Q(\mathbb{S}_{j_1})$. If from $\cgF_{10}$ we learn that $x\parallel y$ both in $Q(\mathbb{S}_{j_1})$ and in $Q(\mathbb{S}_{j_2})$ then $u\not<v$ in the linear extension $L_0^i$ and so we conclude $x\not< y$ in $P$. Therefore, we may assume that $x\parallel y$ in only one of them. In this case this property also identifies in which of the previous two cases we are in, i.e., whether we have  $\tau_i=\tau_\alpha$ or   $\tau_i=\tau_\beta$. Then we may apply this truth function to the bits for the
linear orders in $\cgF_3$ to learn whether $x$ is less than $y$ in $P$.

%

\medskip

\noindent \textbf{Case ``$y$ left of $x$"} This case can clearly be handled in a symmetric manner. As far as the linear orders involved are considered, we may reuse the families $\cgF_8,\cgF_9, \cgF_{10}$ from the previous case, but we need to replace $\cgF_7$ with a new, but analogous family $\cgF_{11}$ of size $4$. 

\medskip

This finishes the description of the families of linear orders and hence the proof of Theorem~\ref{thm:bdim-blocks} for connected posets.

\medskip

To extend the preceding proof to disconnected posets, we simply add at the beginning
two linear orders, guaranteed by Lemma~\ref{lem:partition}, to detect for each pair $(x,y)$ whether $x$ and
$y$ belong to the same component.  Afterwards, we apply the
construction given in the proof to each component.  The manner
in which the linear orders on the components are merged is
arbitrary.

%




\end{document}